\newcommand{\guio}[1]{\nobreakdash-\hspace{0pt}}
\newcommand{\R}{\mathbb{R}}
\newcommand{\T}{\mathbb{T}}
\newtheorem*{theorem*}{Theorem}
\newtheorem{lemma}{Lemma}
\newtheorem*{lemma*}{Lemma}
\newtheorem*{acknowledgements}{Acknowledgements}
\title{The global regularity of vortex patches revisited}
\author{Joan\ Verdera}
\date{}
\begin{document}

\maketitle

\begin{abstract}
We prove persistence of the regularity of the boundary of vortex patches for a  large class of transport equations in the plane. The velocity field is given by convolution of the vorticity with an odd kernel, homogeneous of degree $-1$ and of class $C^2$ off the origin.

\bigskip

\noindent\textbf{AMS 2020 Mathematics Subject Classification:}  35Q31,35Q35 (primary); 35Q49, 42B20 (secondary).

\medskip

\noindent \textbf{Keywords:} transport equation, vortex patch, Calder\'on-Zygmund singular integral.
\end{abstract}

\section{Introduction}\label{intro}

The vorticity form of the Euler equation for incompressible ideal fluids is
\begin{equation}\label{ve}
\begin{aligned}
\partial_{t}\omega+ v\cdot \nabla \omega&=0,\\
v(\cdot,t)&=\omega (\cdot,t)\star \nabla^\perp N,\\
\omega(z,0)&=\omega_{0}(z),
\end{aligned}
\end{equation}
where $\omega(z,t)$ is vorticity at the point $z$ at time $t,$ $z=x+iy \in \mathbb{C}=\mathbb{R}^{2}$, $t\in\mathbb{R},$ and $N(z)=\frac{1}{2\pi} \log|z|$ is the fundamental solution of the laplacian in the plane. Since the kernel in the Biot-Savart law $v(\cdot,t)=\omega (\cdot,t)\star \nabla^\perp N$ is an orthogonal gradient, the velocity field $v$ has zero divergence, in accordance with the fact that the fluid is incompressible.
It is a deep result of Yudovich \cite{Y} that incompressibility yields that \eqref{ve} is well posed in $L^\infty(\R^2)\cap L^1(\R^2),$ in the sense that there exists a unique weak solution of \eqref{ve} for each given initial condition in $L^\infty(\R^2)\cap L^1(\R^2).$

When one takes as initial condition the characteristic function~$\rho_{0}=\chi_{D_{0}}$ of a bounded domain $D_0$, then $\omega(\cdot,t)=\chi_{D_{t}}(\cdot),$ where $D_t$ is a bounded domain. This follows from the fact that \eqref{ve} is a transport equation and one refers to such a solution as a vortex patch. In the eighties the problem of deciding if boundary smoothness persisted for all times was an open challenging question. Chemin   proved in 1993 \cite{Ch} that if  $D_{0}$ is a bounded simply connected domain with boundary of class~$C^{1+\gamma}$, $0<\gamma<1$, then
the weak solution of~\eqref{ve} with initial condition $\chi_{D_0}$ is of the form~$\omega(z,t)=\chi_{D_{t}}(z)$ with $D_{t}$ a simply connected domain of class~$C^{1+\gamma}$ for all times~$t\in\mathbb{R}$.
Indeed in \cite{Ch} one proves a more general statement and the proof depends on para-differential calculus. In \cite{BC} a shorter proof, based on classical analysis methods, was devised. See also \cite{Se}.

In this paper we consider the regularity problem for vortex patches for transport equations of the form

\begin{equation}\label{keq}
\begin{aligned}
\partial_{t}\omega+ v\cdot \nabla \omega&=0,\\
v(\cdot,t)&=\omega (\cdot,t)\star k,\\
\omega(z,0)&=\chi_{D_{0}}(z),
\end{aligned}
\end{equation}
where $k: \mathbb{\R}^2 \setminus \{0 \} \rightarrow \mathbb{\R}^2 $ is an odd kernel, homogeneous of degree $-1,$ of class $C^2$ off the origin, and $D_0$ is a simply connected domain with boundary of class $C^{1+\gamma}$. These kernels produce in general velocity fields $v$ with non-zero divergence. For instance, for $k(z)= \nabla N(z)$ one has $\operatorname{div} v(z,t) = \chi_{D_{t}}(z)$ and the divergence is non-zero but still bounded. This kernel appears in models for aggregation and chemotaxis phenomena in biology (see \cite{BGLV} and the references given there). If $k(z)= \frac{1}{2 \pi z},$ the complex conjugate of $\nabla N(z),$ then
\begin{equation}\label{beu}
\operatorname{div} v(z,t) =  \chi_{D_{t}}(z) \star \operatorname{div} k(z) = \left(-\frac{1}{\pi} \operatorname{p.v.}\frac{x^2-y^2}{|z|^4} \star  \chi_{D_{t}}\right)(z), \quad z \in \mathbb{C},
\end{equation}
which is a second order Riesz transform applied to a characteristic function. The second order Riesz transforms are basic examples of convolution principal value Calder\'on-Zygmund singular integrals, which do not preserve $L^\infty(\mathbb{R}^2).$ Thus, in general, we should expect the divergence to be an unbounded function in $\operatorname{BMO}(\mathbb{R}^2).$ Indeed, if the domain $D_t$ is smoothly bounded, then an even Calder\'on-Zygmund singular integral with kernel of class $C^1$ off the origin sends $\chi_{D_t}$ to a bounded function, an elementary fact that guaranties that $v(z,t)$ is a Lipschitz field. Thus in dealing with equation \eqref{keq} in the  class of smooth vortex patches one has still bounded divergence, but formula \eqref{beu} anticipates serious difficulties.

Our main result reads as follows.

\begin{theorem*}
If $D_{0}$ is a bounded simply connected domain with boundary of class~$C^{1+\gamma}$, $0<\gamma<1$, then
there exists a weak solution of~\eqref{keq} with initial condition $\chi_{D_0}$  of the form~$\omega(z,t)=\chi_{D_{t}}(z)$ with $D_{t}$ a simply connected domain of class~$C^{1+\gamma}$ for all times~$t\in\mathbb{R}$. This solution is unique in the class of characteristic functions of domains of class $C^{1+\gamma}.$
\end{theorem*}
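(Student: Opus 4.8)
The plan is to follow the Bertozzi--Constantin strategy in the streamlined form announced in the introduction, working throughout with the contour dynamics equation \eqref{cde} rather than with a defining function for $D_t$. First I would take for granted, or produce by a contraction argument in the Hölder class $C^{1+\gamma}$ (the analogue of Chapter~8 of \cite{MB}), a local in time solution $X(\cdot,t)$ on some interval $(-T,T)$: for each $t$ the map $X(\cdot,t)$ is a diffeomorphism of $\R^2$ with $X(z,0)=z$, $D_t=X(D_0,t)$, and $\partial_t X(z,t)=v(X(z,t),t)$, where $v(\cdot,t)=\chi_{D_t}\star k$. Introduce the two quantities measuring the smoothness of $D_t$, namely $A(t)=\|\nabla X(\cdot,t)\|_{\gamma,\partial D_0}$ and the Lipschitz constant $L(t)$ of $X(\cdot,t)^{-1}$. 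A standard continuation criterion then reduces the theorem to an a priori bound: if $A(t)$ and $L(t)$ stay finite on every bounded time interval, the local solution extends to all of $\R$.

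For the a priori estimate, differentiate the ODE tangentially along $\partial D_0$ to get $\partial_t\nabla X=(\nabla v\circ X)\,\nabla X$, so that $\frac{d}{dt}A(t)$ is controlled once one bounds the $C^\gamma$ seminorm of $\nabla v$ along $\partial D_t$. Writing $\chi_{D_t}=\chi_{D_0}\circ X(\cdot,t)^{-1}$ and changing variables, $\nabla v$ becomes a principal value singular integral over the \emph{fixed} domain $D_0$, with kernel $\nabla k$ (even, homogeneous of degree $-2$, of class $C^1$ off the origin, hence of Calder\'on--Zygmund type) composed with $X$ in both variables, plus the bounded term coming from the jump of $\chi_{D_t}$. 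Extending $\nabla X$ from $\partial D_0$ to all of $\R^2$ by Whitney's extension theorem with control of the Hölder norm, one rewrites this integral so that the dependence on $X$ enters through a commutator of the singular integral with multiplication by (the extension of) $\nabla X$. Boundedness of such commutators on $C^\gamma$, together with the $C^2$ hypothesis on $k$, then yields
\begin{equation*}
\frac{d}{dt}A(t)\le C\,\bigl(1+\|\nabla v(\cdot,t)\|_\infty\bigr)\,A(t),
\end{equation*}
and a parallel computation for $X(\cdot,t)^{-1}$ gives $\frac{d}{dt}L(t)\le C\,\|\nabla v(\cdot,t)\|_\infty\,L(t)$.

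Next I would prove the logarithmic velocity bound announced in the introduction: since $v(\cdot,t)=\chi_{D_t}\star k$ and $\nabla k$ is an even, smooth, homogeneous Calder\'on--Zygmund kernel, the classical $L^\infty$-to-$\mathrm{BMO}$ mechanism applied to the characteristic function of a domain gives $\|\nabla v(\cdot,t)\|_\infty\le C\,(1+\log^+(A(t)\,L(t)))$, the point being that $A(t)$ and $L(t)$ together quantify how well $\partial D_t$ is flattened at the relevant scales. Setting $\Phi(t)=e+A(t)+L(t)$ and combining with the previous step gives $\frac{d}{dt}\Phi(t)\le C\,\Phi(t)\log\Phi(t)$, hence $\log\log\Phi(t)\le\log\log\Phi(0)+C|t|$; so $\Phi$ is finite on every bounded interval, the local solution is global in $t\ge 0$, and the same argument run backwards handles $t<0$. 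That $\omega(z,t)=\chi_{D_t}(z)$ is a weak solution of \eqref{keq} is then immediate from the transport identity $\chi_{D_t}=\chi_{D_0}\circ X(\cdot,t)^{-1}$. Finally, for uniqueness within the class of $C^{1+\gamma}$ patches one observes that for such a patch the velocity $v=\chi_D\star k$ is bounded and log-Lipschitz, so its flow is uniquely determined; comparing the flows of two solutions with the same initial datum by a Gronwall--Osgood argument forces $D_t^{(1)}=D_t^{(2)}$ for all $t$.

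I expect the crux to be the a priori Hölder estimate: extracting from the pulled-back singular integral a \emph{commutator} whose $C^\gamma$ operator norm is controlled by $\|\nabla v(\cdot,t)\|_\infty$ and depends only \emph{linearly} on $A(t)$, so that the Gronwall step closes. Whitney's extension is essential here because $\nabla X$ lives a priori only on the curve $\partial D_0$ whereas the singular integral is taken over the planar region $D_0$; keeping track of the interplay between the extension, the homogeneity and $C^2$ smoothness of $k$, and the jump term is where the genuine work lies.
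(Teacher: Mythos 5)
Your overall architecture (local existence for the contour dynamics equation, an a priori H\"older bound obtained from a commutator after a Whitney extension, a logarithmic estimate for $\|\nabla v\|_\infty$, and a Gronwall closure) coincides with the paper's, but at the step you yourself flag as the crux there is a genuine gap. The quantity to bound in $C^\gamma$ is $\nabla v(x,s)\,\tau(x)$ on $\Gamma=\partial D_s$, where $\tau$ is the tangent field $\frac{d}{du}X(h(u),s)$. It is \emph{not} true that an arbitrary Whitney extension of $\nabla X$ turns this into a commutator. Writing, as in the paper, $(\partial_i k_1\ast\chi)g_i=(\text{p.v. }\partial_i k_1\ast\chi)g_i-\text{p.v. }\partial_i k_1\ast(\chi g_i)+\partial_i k_1\ast(\chi g_i)$ and summing in $i$, the leftover term is $k_1\ast\bigl(g\cdot\nabla\chi+\chi\operatorname{div}g\bigr)$; for a generic $C^\gamma$ extension $g$ the piece $\chi\operatorname{div}g$ is only a distribution and the leftover cannot be estimated by $(1+\|\nabla v\|_\infty)\|\tau\|_{\gamma,\Gamma}$, so the identity you need simply is not there. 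The paper's whole point is to choose the extension so that this term vanishes identically: Lemma~\ref{jet} shows that $(0,-\tau_2,\tau_1)$ is a $C^{1+\gamma}$ jet on $\Gamma$, Whitney then produces a scalar $\varphi\in C^{1+\gamma}(\mathbb{R}^2)$ vanishing on $\Gamma$ with $\nabla\varphi=(-\tau_2,\tau_1)$ there, and one takes $g=\nabla^\perp\varphi=(\partial_2\varphi,-\partial_1\varphi)$, which is divergence free and restricts to $\tau$ on $\Gamma$; then $\operatorname{div}g=0$ and $g\cdot\nabla\chi_{D_s}=0$ (tangency against the normal measure), so $\nabla v\,\tau$ equals exactly the commutator $\int_{D_s}\nabla k(x-y)(g(x)-g(y))\,dy$. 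Your proposal never supplies this divergence-free/tangential mechanism, and without it the Gronwall step does not close.

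A second problem is your pull-back of the singular integral to the fixed domain $D_0$. For the general kernels of \eqref{keq} the velocity has non-trivial divergence, the flow is not measure preserving, and the change of variables brings in the Jacobian of $X(\cdot,t)$; its H\"older seminorm along the curve is governed by $\int_0^t\|\nabla v(\cdot,s)\|_{\gamma,\partial D_s}\,ds$, which is precisely what you do not control by $\|\nabla v\|_\infty$ alone, so the linear-in-$A(t)$ structure needed for the closure is lost. The paper avoids this by performing the commutator identity and estimate in the moving domain $D_s$ itself. Two minor points: the logarithmic bound you state is of the right form, but its proof is not an $L^\infty$--$\mathrm{BMO}$ argument; it is the geometric tangency estimate of Lemma~\ref{lem4}, exploiting that an even kernel with zero mean on the circle has zero mean on half-circles. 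Your uniqueness sketch (Lipschitz velocity for a $C^{1+\gamma}$ patch plus a Gronwall--Osgood comparison of flows) is consistent with what the paper intends, since no Yudovich theory is available here.
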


This has been proved simultaneously in any dimension in  {CMOV} via an intricate construction of appropriate defining functions.  Our proof avoids the use of defining functions.

We proceed to state the definition of weak solution to \eqref{keq}, forward in time. A function $\omega(z,t) \in L^{\infty}([0,T), L^1(\mathbb{R}^2) \cap L^{\infty}(\mathbb{R}^2)), \, T>0,$ is a weak solution of \eqref{keq} in $\mathbb{R}^2 \times [0,T)$ if for each $0<T_0<T$ and each compactly supported $\varphi \in C^1(\mathbb{R}^2 \times [0,T_0])$ one has
\begin{equation*}\label{wsol}
\begin{split} \int_{\mathbb{R}^2} \big(\varphi(z,T_0) \omega(z,T_0)-\varphi(z,0) \omega_0(z)\big)\, dz=  \int_0^{T_0} \int_{\mathbb{R}^2}\Big((\partial_t \varphi + v\cdot \nabla \varphi) \, \omega +\varphi \, \omega\, \operatorname{div}(v) \Big)\,dz dt.
\end{split}
\end{equation*}
If $\omega(z,t) \in  C^1\big(\mathbb{R}^2\times [0,T)\big)$ is a classical solution of \eqref{keq}, then it is a weak solution; to show this one just applies the transport formula. Conversely, if $\omega(z,t) \in  C^1\big(\mathbb{R}^2\times [0,T)\big)$ is a weak solution, then it is a classical solution.
See \cite[Chapter 8]{MB} for a discussion of weak solutions for the Euler vorticity equation.

We now briefly outline the proof of the theorem. As in \cite{BC}, first we take for granted that a local in time solution for the CDE (contour dynamics equation \eqref{cde}) exists (see Chapter 8 of \cite{MB}) and then we continue by proving an a priori inequality for the relevant quantities giving the smoothness of $D_t.$ In the appendix we give indications, for the sake of the reader, on how to solve locally in time the CDE in our case (the argument is basically routine). 

The core of the proof is the second step, in which we show the a priori inequality for the 
 H\"older seminorm of order $\gamma$ of the gradient of the local in time solution $X(\cdot,t)$ of the contour dynamics equation. This is achieved by bringing in a commutator,  which appears after an application of Whitney's extension theorem. Is at this moment when we have to leave the boundary of $D_t$ and work in the ambient space.
 
 The third step consists in proving a logarithmic inequality which estimates $\| \nabla v(\cdot,t)\|_\infty$ in terms of $\| \nabla X(\cdot,t) \|_{\gamma, \partial D_0}$ and the Lipschitz constant of the inverse of $X(\cdot,t).$ Again the estimate is performed by resorting to objects that live in the plane but off the boundary of $D_t.$
 
 The hypothesis on the kernel $k$ are the more general one can reasonably think of. The fact that the kernel is odd and homogeneous of degree $-1$  gives that a first order derivative of the kernel is a constant multiple of the delta function plus an even Calder\'on-Zygmund singular integral. The kernels of these even singular integrals are $C^1$ off the origin since $k$ is assumed to be $C^2$ off the origin, and thus send characteristic functions of smoothly bounded domains into bounded functions.

\section{The a priori estimate}\label{sec2}

Parametrize the boundary $\partial D_{0}$ by a mapping $h\colon \mathbb{T}\to \partial D_{0}$,  of class~$C^{1+\gamma}(\mathbb{T},\R^2)$ satisfying the bilipschitz condition
$$
c_{0}^{-1}  \,|u-v| \le |h(u)-h(v)|\le c_{0} \, |u-v|,\quad u,v\in\mathbb{T}
$$
for some positive constant $c_0.$ The mapping $h$ induces a norm in the vector space of $C^{1+\gamma}$ functions on the boundary of $D_0$ with values in the plane defined on
$X \in C^{1+\gamma}(\partial D_0,\R^2)$ by
\begin{equation}\label{norm}
\|X\|_{1+\gamma} = \|X\|_{\infty, \partial D_0} + \|\nabla X\|_{\gamma, \partial D_0},
\end{equation}
where 
$$
\|X\|_{\infty, \partial D_0} = \sup\{|X(\alpha)| : \alpha \in \partial D_0\}
$$
and
$$
\|\nabla X\|_{\gamma, \partial D_0} = \sup \{\frac{|\frac{d}{du}X(h(u))-\frac{d}{du} X(h(v))|}{|u-v|^\gamma}: u, v  \in \mathbb{T}, u\neq v\}.
$$
We have used the notation (which is an abuse of language we adopt for the sake of conciseness) 
$$
\frac{d}{du} f(u)= \frac{d}{d\theta} f(e^{i\theta}), 
$$
for each differentiable function $f$ defined on $\T.$ 
The contour dynamics equation is an ordinary differential equation defined in the open set $\Omega$ of $C^{1+\gamma}(\partial D_0,\R^2)$ consisting of those mappings in  $C^{1+\gamma}(\partial D_0,\R^2)$ satisfying the bilipschitz condition
\begin{equation}\label{bil}
0< b=b(X)=\inf \{\frac{|X(\alpha)-X(\beta)|}{|\alpha-\beta|}: \alpha, \beta \in \partial D_0, \,\alpha\neq\beta\}.
\end{equation}
For more details on the contour dynamics equation (CDE) in this context the reader may consult the last section.

The CDE has a unique solution $X(\cdot,t) \in \Omega$ for $t\in(-T,T)$ satisfying the initial condition $X(\cdot,0)=I,$ where $I$ stands for the identity map on $\partial D_0.$ Then $t \rightarrow X(\alpha,t)$ is the trajectory of the particle that at time $0$ is at the point $\alpha \in \partial D_0$ and
$X(\partial {D_0},t)$ is a Jordan curve of class $C^{1+\gamma}$ which encloses a simply connected domain $D_t.$ The function
$\omega(z,t)=\chi_{D_t}(z)$ is a weak solution of equation \eqref{keq} (forward in time)  in $\mathbb{R}^2 \times [0,T).$ This can be shown as in \cite[proposition 8.6, p.332]{MB}.

Thus the flow associated with the Lipschitz velocity field $v(\cdot,t)=\chi_{D_t} (\cdot,t)\star k,$  namely the solution of 
\begin{equation}\label{flow}
\begin{split}
\frac{d}{dt}X(\alpha,t)&=v(X(\alpha,t),t),\\*[5pt]
X(\alpha,t)&=\alpha\in \R^2,
\end{split}
\end{equation}
coincides with the solution of the CDE for $\alpha \in \partial D_0.$
 What remains to be proven is that the maximal time of existence of the solution of the CDE  is $T=\infty$ and for that we need an a priori inequality. By \eqref{flow}
$$
X(h(u),t)=h(u)+\int_{0}^{t} v(X(h(u),s),s)\,ds, \quad u \in \mathbb{T}.
$$
Hence
\begin{equation}\label{eq2}
\frac{d}{du}X(h(u),t)=\frac{dh}{du}(u)+\int^{t}_{0}\nabla v (X(h(u),s),s)\left(\frac{d}{du}X(h(u),s)\right)\,ds.
\end{equation}

Clearly $\frac{d}{du}X(h(u),s)$  is a tangent vector to~$\partial D_{s}$ at the point $X(h(u),s).$  We would like to show that
$$
\nabla v\big(X(h(u),s),s\big)\left(\frac{d}{du}X(h(u),s)\right)
$$
is a commutator. We need three lemmas. The first works in $\mathbb{R}^n.$  The notion of jet is in \cite[p. 176]{S}, although the term jet is not used there. Given a subset $S$ of $ \subset \mathbb{R}^n$ and a vector field $\vec{F}: S \rightarrow \mathbb{R}^n$ set
$$
\| \vec{F}\|_{\gamma, S}= \sup \{\frac{|\vec{F}(x) -\vec{F}(y)|}{|x-y|^\gamma}: x, y  \in S, x\neq y \}.
$$

\begin{lemma}\label{jet}
	Let $\Gamma$ be a hypersurface of dimension~$n-1$ and of class~$C^{1+\gamma}$ in~$\mathbb{R}^{n}$. Let $N(x)=(N_{1}(x),\dotsc,N_{n}(x))$ be a normal field on~$\Gamma$ of class~$C^{\gamma}(\Gamma)$. Then $(0,N(x))$ is a jet of class~$C^{1+\gamma}(\Gamma)$ with constant~$A\|N\|_{\gamma,\Gamma}$, $A$ a constant depending only on~$\gamma$. In other words,
	$$
	\sup_{x,y\in\Gamma,\,x\ne y}|(y-x)\cdot N(x)|\le A\|N\|_{\gamma,\Gamma} \,|y-x|^{1+\gamma}.
	$$
\end{lemma}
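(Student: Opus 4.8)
The only substantive assertion is the displayed inequality: the other half of the definition of a $C^{1+\gamma}$-jet, that $x\mapsto N(x)$ be $C^{\gamma}$ with constant $A\|N\|_{\gamma,\Gamma}$, holds with $A=1$. Write $M=\|N\|_{\gamma,\Gamma}$; we will produce $A=A(\gamma)$ with $|(y-x)\cdot N(x)|\le AM|y-x|^{1+\gamma}$ for all $x\ne y$ on $\Gamma$. We may assume $\Gamma$ is compact and without boundary, say $\Gamma=\partial D$ with $N$ the outer unit normal, which is the case in the application. The plan hinges on one observation: if $\sigma\colon[0,1]\to\Gamma$ is a $C^{1}$ path with $\sigma(0)=x$, $\sigma(1)=y$, then $\sigma'(t)$ is tangent to $\Gamma$, hence orthogonal to $N(\sigma(t))$, so
\[
(y-x)\cdot N(x)=\int_{0}^{1}\sigma'(t)\cdot N(x)\,dt=\int_{0}^{1}\sigma'(t)\cdot\bigl(N(x)-N(\sigma(t))\bigr)\,dt,
\]
whence $|(y-x)\cdot N(x)|\le M\int_{0}^{1}|\sigma'(t)|\,|\sigma(t)-x|^{\gamma}\,dt$. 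Everything reduces to joining $x$ to $y$, when they are close, by a path that is short and stays near $x$.

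Set $\delta=(4M)^{-1/\gamma}$, so that every $z\in\Gamma$ with $|z-x|<\delta$ satisfies $|N(z)-N(x)|<M\delta^{\gamma}=\tfrac14$. If $|x-y|\ge\delta/2$ then $|(y-x)\cdot N(x)|\le|x-y|\le 2^{\gamma+2}M|x-y|^{1+\gamma}$ and we are done. Assume $|x-y|<\delta/2$; after a rigid motion take $x=0$ and $N(0)=e_{n}$. The key claim is that $\Gamma\cap B(0,\delta)$ is the graph $\{(z',\varphi(z')):z'\in U\}$ of a $C^{1+\gamma}$ function $\varphi$ over an open set $U\subseteq\mathbb{R}^{n-1}=T_{0}\Gamma$ with $0\in U$, $\varphi(0)=0$, $|\nabla\varphi|\le\tfrac13$ on $U$, and $U$ containing the ball $B'(0,\delta/2)$ of $\mathbb{R}^{n-1}$. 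Granting this, take $\sigma(t)=(ty',\varphi(ty'))$, the lift of the segment $[0,y']$; it runs on $\Gamma$ from $0$ to $y$, and $|\nabla\varphi|\le\tfrac13$ gives $|\sigma'(t)|\le\sqrt2\,|x-y|$ and $|\sigma(t)-x|\le\sqrt2\,|x-y|<\delta$. Plugging into the inequality above, $|(y-x)\cdot N(x)|\le 2^{(1+\gamma)/2}M|x-y|^{1+\gamma}$, and $A=2^{\gamma+2}$ serves in both cases.

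It remains to justify the graph claim. The implicit function theorem provides the $C^{1+\gamma}$ graph over $T_{0}\Gamma$ near $0$; at any point of it in $B(0,\delta)$ its slope is $|N'(z)|/N_{n}(z)$ (with $N=(N',N_{n})$), which is at most $(1/4)/(3/4)=1/3$ since $|N(z)-e_{n}|<1/4$ there. As $\Gamma$ has no boundary, this graph cannot terminate inside $B(0,\delta)$, and a continuation argument makes its domain contain $B'(0,\delta/2)$. Lastly it is single-valued over $B(0,\delta)$: if two points of $\Gamma\cap B(0,\delta)$ lay over the same point of $\mathbb{R}^{n-1}$, the vertical segment joining them would stay in $B(0,\delta)$ and, being in the exterior of $D$ just past its lower endpoint and in the interior just before its upper endpoint, would have to meet $\Gamma$ at an intermediate point $z_{3}$ at which the upward direction $e_{n}$ points into $D$; then $N(z_{3})\cdot e_{n}<0$, so $|N(z_{3})-e_{n}|>\sqrt2$, contradicting $z_{3}\in B(0,\delta)$.

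I expect this last step --- pinning down the \emph{scale} $\delta\sim M^{-1/\gamma}$ at which $\Gamma$ is a graph of universally bounded slope --- to be the only delicate point, and it is exactly what makes $A$ depend on $\gamma$ alone rather than on the size of some particular chart of $\Gamma$. It is essential here that $\|N\|_{\gamma,\Gamma}$ is measured with the ambient distance $|x-y|$: that is what prevents a distinct sheet of $\Gamma$, or a sharp fold, from entering $B(0,\delta)$. (On a sphere of radius $R$, where $|(y-x)\cdot N(x)|=\tfrac1{2R}|x-y|^{2}$ and $\|N\|_{\gamma}=2^{1-\gamma}R^{-\gamma}$, the two sides scale the same way, confirming that a universal $A$ is the right target.)
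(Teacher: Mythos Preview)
Your proof is correct and shares the paper's overall architecture: fix $x$, choose a scale $\delta$ so that $M\delta^\gamma$ is a small fraction of $|N(x)|$, handle $|y-x|\gtrsim\delta$ by the trivial bound, and for nearby $y$ use that $\Gamma\cap B(x,\delta)$ is a graph of bounded slope over $T_x\Gamma$. The genuine difference is in the near estimate. The paper writes $|y\cdot N(0)|\le|\varphi(y')|\,|N(0)|\le\|\nabla\varphi\|_{\gamma,U}\,|y'|^{1+\gamma}\,|N(0)|$ (using $\varphi(0)=0$, $\nabla\varphi(0)=0$) and then spends the bulk of the argument bounding $\|\nabla\varphi\|_{\gamma,U}$ via $\|N/N_n\|_\gamma$ and ultimately by $C\|N\|_{\gamma,\Gamma}/|N(0)|$. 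Your path-integral identity $(y-x)\cdot N(x)=\int_0^1\sigma'(t)\cdot\bigl(N(x)-N(\sigma(t))\bigr)\,dt$ bypasses this entirely: it feeds the H\"older bound on $N$ in directly and needs only the pointwise slope bound $|\nabla\varphi|\le 1/3$ to control the length and spread of $\sigma$. That is a real simplification, and your closing remarks about the scale $\delta\sim M^{-1/\gamma}$ identify exactly the same mechanism the paper relies on.

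One correction, though: your reduction to \emph{unit} $N$ is not actually ``the case in the application.'' In Lemma~\ref{lem2} the normal field is $(-\tau_2,\tau_1)$ for an arbitrary $C^\gamma$ tangent field $\tau$, with no control on $|N|$. The paper accommodates this by setting $\delta^{-\gamma}=2\|N\|_{\gamma,\Gamma}/|N(0)|$; then the far estimate becomes $|y\cdot N(0)|\le |y|\,|N(0)|=2M\delta^\gamma|y|\le 2M|y|^{1+\gamma}$, and the graph structure follows from $|N(z)-N(0)|\le M\delta^\gamma=|N(0)|/2$. Your path-integral bound is indifferent to $|N|$, so the only repair needed is this adjustment of $\delta$ (and of the far estimate) to carry the factor $|N(x)|$.
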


\begin{proof}
	Assume, without loss of generality, that $x=0$ and $N(0)=(0,\dotsc,0,N_{n}(0))$ with $N_{n}(0)>0$. Set  $\delta^{-\gamma}=2\frac{\|N\|_{\gamma,\Gamma}}{|N(0)|}$.
	
	If $|y|\ge\delta$, then
	$$
	|y\cdot N(0)|\le |y| |N(0)|=|y|2\|N\|_{\gamma,\Gamma}\,\delta^{\gamma}\le 2\|N\|_{\gamma,\Gamma}\,|y|^{1+\gamma},
	$$
	as required.

If $y\in B(0,\delta)\cap\Gamma$, then
$$
|N(y)-N(0)|\le \|N\|_{\gamma,\Gamma}\,\delta^{\gamma}=\frac{|N(0)|}{2}.
$$
Hence the tangent hyperplane to $\Gamma$ at~$y$ forms an angle of less than~$30^{\circ}$ with the horizontal hyperplane.~Consequently $B(0,\delta)\cap\Gamma$ is the graph of a function~$y_{n}=\varphi (y')$, $y'=(y_{1},\dotsc,y_{n-1})$, of class~$C^{1+\gamma}$, defined on the projection~$U$ of~$B(0,\delta)\cap\Gamma$ on $\{y\in \mathbb{R}^{n}: y_{n}=0\}$. Note that $U$ is open in~$\mathbb{R}^{n-1}$ and that, for $y\in B(0,\delta)\cap \Gamma$, the segment with extremes~$0$ and $y'$ is contained in~$U$ (by the implicit function theorem).~It is also clear that $|\nabla\varphi (y)|\le 1$, $y'\in U$, because of the inclination of tangent hyperplanes with respect to the horizontal hyperplane. 

If $y\in B(0,\delta)\cap \Gamma$ we have
$$
|y\cdot N(0)|\le | \varphi(y')| |N(0)|\le \left(\sup_{|z'|\le |y|,\,z'\in U}|\nabla \varphi (z')|\right) |y'| |N(0)|\le \|\nabla \varphi\|_{\gamma,\Gamma} |y'|^{1+\gamma} |N(0)|.
$$
Set
$$
M(y')=(-\partial_{1}\varphi (y'),\dotsc,-\partial_{n-1}\varphi (y'),1),\quad y'\in U,
$$
which is a normal vector to~$\Gamma$ at $y=(y',\varphi(y'))$. Hence
$$
M(y')=\frac{N(y)}{N_{n}(y)},\quad y\in B(0,\delta)\cap \Gamma,
$$
and
$$
\|\nabla \varphi\|_{\gamma,U}=\|M\|_{\gamma,U}=\left\|\frac{N(y)}{N_{n}(y)}\right\|_{\gamma, U}.
$$
Take $y,z\in B(0,\delta)\cap\Gamma$. Then
$$
|y-z|=(|y'-z'|^{2}+|\varphi (y')-\varphi(z')|^{2})^{1/2}\le \sqrt{2} |y'-z'|.
$$
One has
\begin{equation}\label{eq5}
\left| \frac{N(y)}{N_{n}(y)} -\frac{N(z)}{N_{n}(z)}\right| \le\frac{1}{|N_{n}(y)|} |N(y)-N(z)|+ |N(z)|\frac{|N_{n}(z)-N_{n}(y)|}{|N_{n}(y)||N_{n}(z)|}.
\end{equation}
By the definition of $\delta$
$$
|N_{n}(y)|\ge |N_{n}(0)|-|N_{n}(y)-N_{n}(0)|\ge |N(0)|-\|N\|_{\gamma,\Gamma}\delta^{\gamma}=\frac{|N(0)|}{2}.
$$
Thus the right hand side of \eqref{eq5} is not greater than
$$
2^{3+\gamma/2}\frac{\|N\|_{\gamma,\Gamma}}{|N(0)|} |y'-z'|^{\gamma}.
$$
Therefore
$$
\frac{|y\cdot N(0)|}{|y|^{1+\gamma}}\le\|\nabla \varphi\|_{\gamma,U} |N(0)|\le 2^{3+\gamma/2}\|N\|_{\gamma,\Gamma}
$$
and the lemma follows with $A=2^{3+\gamma/2}$.
\end{proof}

\begin{lemma}\label{lem2}
Let $\Gamma$ be a Jordan curve of class~$C^{1+\gamma}$ in the plane and $\tau(x)=(\tau_{1}(x),\tau_{2}(x))$, $x\in\Gamma,$  a tangent vector to $\Gamma$ at~$x$ with $\tau \in C^{\gamma}(\Gamma,\mathbb{R}^{2})$.~Then there exists $g(x)=(g_{1}(x),g_{2}(x))\in C^{\gamma} (\mathbb{R}^{2},\mathbb{R}^{2})$ such that $g=\tau$ on~$\Gamma$, $\|g\|_{\gamma,\mathbb{R}^{2}}\le C\|\tau\|_{\gamma,\Gamma}$ for a positive constant $C$ depending only on $\gamma,$  and $\operatorname{div}g=0$ in~$\mathbb{R}^{2}$.
\end{lemma}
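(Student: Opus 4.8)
The plan is to realize $g$ as a rotated gradient. Since $\operatorname{div}(\nabla^{\perp}\psi)=0$ identically, where $\nabla^{\perp}\psi=(-\partial_{2}\psi,\partial_{1}\psi)$, it suffices to produce a scalar function $\psi\in C^{1+\gamma}(\mathbb{R}^{2})$ with $\nabla^{\perp}\psi=\tau$ on $\Gamma$ and $\|\nabla\psi\|_{\gamma,\mathbb{R}^{2}}\le C\|\tau\|_{\gamma,\Gamma}$; then $g:=\nabla^{\perp}\psi$ does the job. Requiring $\nabla^{\perp}\psi=\tau$ on $\Gamma$ is the same as requiring $\nabla\psi=N$ on $\Gamma$, where $N:=(\tau_{2},-\tau_{1})$. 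The vector $N(x)$ is obtained from $\tau(x)$ by a fixed rotation, so $N\in C^{\gamma}(\Gamma,\mathbb{R}^{2})$ with $\|N\|_{\gamma,\Gamma}=\|\tau\|_{\gamma,\Gamma}$, and since $N\cdot\tau=0$ on $\Gamma$, the field $N$ is a normal field to $\Gamma$. This is exactly the situation covered by Lemma~\ref{jet}.

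Next I would build $\psi$ through Whitney's extension theorem. Consider on the compact set $\Gamma$ the $1$\guio{}jet whose value is identically $0$ and whose first-order part is $N$. The two compatibility conditions for a jet of class $C^{1+\gamma}$ read, for $x,y\in\Gamma$,
$$
|0-0-N(x)\cdot(y-x)|\le M\,|y-x|^{1+\gamma},\qquad |N(y)-N(x)|\le M\,|y-x|^{\gamma}.
$$
The second holds with $M=\|N\|_{\gamma,\Gamma}$ by the very definition of the Hölder seminorm, and the first is precisely the conclusion of Lemma~\ref{jet} applied with $n=2$ and the normal field $N$, which gives it with $M=A\|N\|_{\gamma,\Gamma}$, $A$ depending only on $\gamma$. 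Hence $(0,N)$ is a jet of class $C^{1+\gamma}$ on $\Gamma$ with constant bounded by a multiple of $\|\tau\|_{\gamma,\Gamma}$, and Whitney's extension theorem \cite[p.~176]{S} yields $\psi\in C^{1+\gamma}(\mathbb{R}^{2})$ with $\psi=0$ and $\nabla\psi=N$ on $\Gamma$ and with $\|\nabla\psi\|_{\gamma,\mathbb{R}^{2}}\le C\|\tau\|_{\gamma,\Gamma}$, the constant $C$ depending only on $\gamma$ (the dimension being fixed).

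It then remains to set $g:=\nabla^{\perp}\psi=(-\partial_{2}\psi,\partial_{1}\psi)$. By construction $\operatorname{div}g=0$ in $\mathbb{R}^{2}$, $g\in C^{\gamma}(\mathbb{R}^{2},\mathbb{R}^{2})$ with $\|g\|_{\gamma,\mathbb{R}^{2}}\le C\|\tau\|_{\gamma,\Gamma}$, and on $\Gamma$ one computes $g=(-N_{2},N_{1})=(\tau_{1},\tau_{2})=\tau$, which is the desired extension.

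The construction presents no serious obstacle once the right reformulation is in place: a divergence-free planar field is a perpendicular gradient, and prescribing its boundary values amounts to prescribing the gradient of $\psi$ on $\Gamma$ together with the (harmless) choice $\psi|_{\Gamma}=0$. The one point that genuinely needs the geometry of $\Gamma$ — namely that $(0,N)$ really is an admissible $C^{1+\gamma}$ jet — is supplied verbatim by Lemma~\ref{jet}; the remaining care is purely bookkeeping, checking that Whitney's theorem controls the Hölder seminorm of $\nabla\psi$ on all of $\mathbb{R}^{2}$ by the jet constant, which it does because $\Gamma$ is compact.
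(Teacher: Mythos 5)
Your proposal is correct and follows essentially the same route as the paper: apply Lemma~\ref{jet} to the rotated (normal) field, feed the jet $(0,N)$ into Whitney's extension theorem to get a scalar $C^{1+\gamma}$ potential vanishing on $\Gamma$, and take the perpendicular gradient as the divergence-free extension $g$. The only difference is the sign convention in the rotation (your $\psi$ is the negative of the paper's $\varphi$), which is immaterial.
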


\begin{proof}
The field $(-\tau_{2}(x),\tau_{1}(x))$ is normal to~$\Gamma$ at each $x\in\Gamma$. By the previous lemma $(0,-\tau_{2}(x),\tau_{1}(x))$ is a $C^{1+\gamma}$ jet on~$\Gamma$  with constant $C_\gamma \, \|\tau\|_{\gamma, \Gamma}$ and by Whitney's extension theorem there is a function~ $\varphi \in C^{1+\gamma}(\mathbb{R}^{2},\mathbb{R})$ such that
$$
\varphi=0,\quad \partial_{1}\varphi=-\tau_{2},\quad \partial_{2}\varphi=\tau_{1},\quad \text{on $\Gamma$}
$$
and $\|\nabla \varphi\|_{\gamma, \mathbb{R}^2} \le C_0\, C_\gamma\, \|\tau\|_{\gamma,\Gamma},$ with $C_0$ an absolute constant  \cite[Theorem 4, p.177]{S}.
Therefore
$$
\tau(x)=(\partial_{2}\varphi(x),-\partial_{1}\varphi(x)),\quad x\in\Gamma.
$$
Set $g=(\partial_{2}\varphi,-\partial_{1}\varphi),$ so that $g$ has no divergence in the plane and $\|g\|_{\gamma, \mathbb{R}^2} \le C\, \|\tau\|_{\gamma,\Gamma},$ where $C=C_0 \,C_\gamma$ depends only on $\gamma.$
\end{proof}

\begin{lemma}\label{lem3}
Let $\tau(x)=(\tau_{1}(x),\tau_{2}(x))\in C^{\gamma}(\Gamma,\R^2)$ be a tangent field on $\Gamma=\partial D_{s}$. Let $g \in C^{\gamma} (\mathbb{R}^{2},\mathbb{R}^{2})$  be the divergence free global field extending $\tau$ provided by Lemma~\ref{lem2}. Then
\begin{equation}\label{eq3}
\nabla v (x,s)(\tau(x)), \quad x \in \Gamma,
\end{equation}
is the restriction to $\Gamma$ of the commutator 
\begin{equation}\label{commut}
\int_{D_{s}}\nabla k(x-y)(g(x)-g(y))\,dy,\quad x \in \R^2.
\end{equation}
\end{lemma}

\begin{proof}
The first component of \eqref{eq3} is
\begin{equation}\label{eq4}
\partial_{1}v_{1}(x,s)\tau_{1}(x)+\partial_{2}v_{1}(x,s)\tau_{2}(x)= (\partial_{1}k_{1}*\chi)(x)\tau_{1}(x)+(\partial_{2}k_{1}*\chi)(x)\tau_{2}(x)
\end{equation}
with $\chi=\chi_{D_{s}}$. In the sense of distributions ($\operatorname{p.v.}$ stands for principal value)
\begin{alignat*}{2}
\partial_{1}k_{1}&=\operatorname{p.v. }\partial_{1}k_{1}+c_{1}\delta_{0}, &\quad\quad c_{1}&=\int_{|x|=1}k_{1}(x)x_{1}\,d\sigma(x)
\intertext{and}
\partial_{2}k_{1}&=\operatorname{p.v. }\partial_{2}k_{1}+c_{2}\delta_{0}, &\quad\quad c_{2}&=\int_{|x|=1}k_{1}(x)x_{2}\,d\sigma(x).
\end{alignat*}

Let $g$ be the field given by Lemma~\ref{lem2}. Since $(\partial_{1}k_{1}*\chi)g_{1}=(\text{p.v. }\partial_{1}k_{1}*\chi)g_{1}+c_{1}\chi g_{1}$ and $\partial_{1}k_{1}*(\chi g_{1})=\text{p.v. }\partial_{1}k_{1}*(\chi g_{1})+c_{1}\chi g_{1}$, we get
$$
(\partial_{1}k_{1}*\chi)g_{1}=(\text{p.v. }\partial_{1}k_{1}*\chi)g_{1}-\text{p.v. }\partial_{1}k_{1}*(\chi g_{1})+\partial_{1}k_{1}*(\chi g_{1}).
$$
Similarly
$$
(\partial_{2}k_{1}*\chi)g_{2}=(\text{p.v. }\partial_{2}k_{1}*\chi)g_{2}-\text{p.v. }\partial_{2}k_{1}*(\chi g_{2})+\partial_{2}k_{1}*(\chi g_{2}).
$$
Note that
$$
\partial_{i}k_{1}*(\chi g_{i})=k_{1}*\partial_{i}(\chi g_{i})=k_{1}*(g_{i}\partial_{i}\chi+\chi\partial_{i}g_{i})
$$
and so, recalling that $g$ has vanishing divergence,
$$
\partial_{1}k_{1}*(\chi g_{1})+\partial_{2}k_{1}*(\chi g_{2})=k_{1}*(g\cdot \nabla \chi +\chi \operatorname{div}g)=k_{1}*(g\cdot \nabla \chi)= 0
$$
since $g_{|\Gamma}=\tau$ is tangent to $\Gamma$ and $\nabla \chi$ normal to~$\Gamma$ at each point of~$\Gamma$. Thus if $x\in \Gamma=\partial D_{s}$
\begin{equation*}
\begin{split}
(\partial_{1}k_{1}*\chi)(x)\tau_{1}(x)+(\partial_{2}k_{1}*\chi)(x)\tau_{2}(x)&=\int_{D_s}\partial_{1}k_{1}(x-y)(g_{1}(x)-g_{1}(y)\,dy\\
&\quad+\int_{D_s}\partial_{2}k_{1}(x-y)(g_{2}(x)-g_{2}(y))\,dy
\end{split}
\end{equation*}
is  indeed the first component of \eqref{commut}. One has the corresponding formula for the second component of $\nabla v(x,s)(\tau(x))$ and the lemma follows.
\end{proof}

Therefore, by the usual commutator estimate in the H\"older seminorm \cite[Lemma, p.26]{BC} or \cite[Lemma 3.2, p.3799]{BGLV},
$$
\|\nabla v(x,s)(\tau(x))\|_{\gamma,\Gamma}\le C\, \left(1+\|\nabla v(\cdot,s)\|_{\infty}\right)\|g\|_{\gamma,\mathbb{R}^{2}}\le C\, \left(1+ \|\nabla v(\cdot,s)\|_{\infty}\right) \|\tau\|_{\gamma,\Gamma}.
$$

From \eqref{eq2} we see that
$$
\left\|\frac{d}{du}X(h(u),t)\right\|_{\gamma,\mathbb{T}}\le \left\|\frac{d}{du} h(u)\right\|_{\gamma,\mathbb{T}}+C\,\int_{0}^{t} \left(1+\|\nabla v(\cdot,s)\|_{\infty}\right)\left\|\frac{d}{du}X(h(u),s)\right\|_{\gamma,\mathbb{T}}\,ds
$$
and then, by Gronwall's lemma,  we get the a priori inequality
$$
\left\|\frac{d}{du}X(h(u),t)\right\|_{\gamma,\mathbb{T}}\le \left\|\frac{d}{du} h(u)\right\|_{\gamma,\mathbb{T}}
\exp \left(C\,\int_{0}^{t} \left(1+\|\nabla v(\cdot,s)\|_{\infty}\right) \,ds\right).
$$
We also have an a priori inequality for
\begin{equation*}
\begin{split}
b(t)&\equiv \inf_{\alpha\ne \beta}\left| \frac{X(\alpha,t)-X(\beta,t)}{|\alpha-\beta|}\right|=\frac{1}{\sup\limits_{\alpha\ne\beta}
\frac{|\alpha-\beta|}{|X(\alpha,t)-X(\beta,t)|}}\\*[5pt]
&= \frac{1}{\|\nabla X^{-1}(\cdot,t)\|_{\infty}} \ge\exp \left(-\int^{t}_{0}\|\nabla v(\cdot,s)\|_{\infty}\,ds\right).
\end{split}
\end{equation*}
Setting
$$
q(t)=\frac{\bigl\|\frac{d}{du}X(h(u),t)\bigr\|_{\gamma,\mathbb{T}}}{b(t)^{1+\gamma}},  \quad -T< t<T,
$$
we finally obtain
\begin{equation}\label{apri}
q(t)\le C\, \exp \left(C\int^{t}_{0} \left(1+ \|\nabla v(\cdot, s)\|_{\infty} \right) \,ds\right), \quad 0< t<T.
\end{equation}

\section{The logarithmic inequality}\label{sec5}

Let $K\in C^{1} (\mathbb{R}^{2}\backslash \{0\})$ be a real function, homogeneous of degree~$-2$ and even. Set
$$
Tf(x)=\text{p.v. }\int_{\mathbb{R}^{2}} K(x-y) f(y)\,dy,
$$
so that $T$ is a convolution, smooth, homogeneous, even Calder\'on--Zygmund operator. 

Let $X\in C^{1+\gamma}(\partial D_{0},\mathbb{R}^{2})$, $D_{0}$ a domain of class~$C^{1+\gamma}$.~Assume that $X$ is bilipschitz onto the image and let $b$ stand for the inverse of Lipschitz norm of the inverse mapping, as in \eqref{bil}.
Since $X(\partial D_{0})$ is a Jordan curve, the simply connected domain $D$ enclosed by the curve satisfies 
$\partial D=X(\partial D_{0})$.

The maximal singular integral of a function $f$ is
$$
T^{*}f(x)=\sup_{\varepsilon>0}\int_{|y-x|>\varepsilon}K(x-y)f(y)\,dy,\quad x\in\mathbb{R}^{2}.
$$

\begin{lemma}\label{lem4}
We have
\begin{equation*}
\begin{split}
T^{*}(\chi_{D})(x) & \le A\left(1+\log^{+}\left(|D|^{1/2} \,\|\frac{d}{du} [X(h(u))]\|_{\gamma, \T} \, \frac{1}{b^{1+\gamma}}\right)\right)\\*[5pt]
& =  A \left(1+\log^{+}\left(|D|^{1/2} \, q(t)\right)\right), \quad x \in \R^2,
\end{split}
\end{equation*}
where $A$ is a constant depending only on~$\gamma$ and the kernel $K$.
\end{lemma}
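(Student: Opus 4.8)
The plan is to reduce the estimate on $T^{*}(\chi_D)(x)$ to a geometric statement about how much of $\partial D$ sits near any given annulus, transported back to $\partial D_0$ via the bilipschitz map $X$. First I would fix $x\in\R^2$ and $\varepsilon>0$ and split the truncated integral $\int_{|y-x|>\varepsilon}K(x-y)\chi_D(y)\,dy$ into two regions: the ``far'' part $|y-x|>R$ and the ``near'' part $\varepsilon<|y-x|\le R$, where $R\sim|D|^{1/2}$ is chosen so that $D\subset B(x,R)$ up to a set accounting for at most an absolute constant (one uses $|D\setminus B(x,R)|\le|D|$ and homogeneity of degree $-2$ of $K$ to bound the far contribution by an absolute constant — more precisely, outside $B(x,R)$ one has $|K(x-y)|\le C R^{-2}$ on the relevant shells and $|D\cap(B(x,2^{j+1}R)\setminus B(x,2^{j}R))|\le|D|\le C R^2$, and summing the geometric-in-$R^{-2}$ tail converges). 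So the far part contributes $O(1)$.

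For the near part, the standard trick for even homogeneous CZ kernels is that $\int_{\varepsilon<|y-x|\le R}K(x-y)\,dy=0$ by the cancellation property (mean zero on spheres), so I may replace $\chi_D(y)$ by $\chi_D(y)-1$ there, i.e. I only need to control $\int_{\varepsilon<|y-x|\le R}|K(x-y)|\,\chi_{D^c}(y)\,dy$; this is bounded, using $|K(x-y)|\le C|x-y|^{-2}$, by $C\int_{\varepsilon<|y-x|\le R}|x-y|^{-2}\chi_{D^c}(y)\,dy$. Decomposing dyadically into annuli $A_j=\{2^{j}\varepsilon<|y-x|\le 2^{j+1}\varepsilon\}$ for $2^{j}\varepsilon\le R$, each annulus contributes $\lesssim (2^{j}\varepsilon)^{-2}\,|A_j\cap D^c|$, and the number of relevant $j$ is $\lesssim 1+\log^{+}(R/\varepsilon)$. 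If one can show $|A_j\cap D^c|\lesssim (2^{j}\varepsilon)^{2}$ for every such annulus, then each term is $O(1)$ and summing gives $A(1+\log^{+}(R/\varepsilon))$. Since $\varepsilon$ is arbitrary, this by itself is not yet bounded; the point is that for $\varepsilon$ small relative to the curvature scale of $\partial D$ the annulus $A_j$ either lies entirely inside $D$, entirely outside $D$, or straddles $\partial D$, and in the straddling case the chord–arc (bilipschitz) property of $X$ forces $\partial D\cap A_j$ to be a controlled piece, so $|A_j\cap D^c|$ is comparable to $(2^{j}\varepsilon)^2$ only down to the scale where $\partial D$ is essentially flat; below that scale the annulus is fully on one side and contributes nothing. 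This is where $q(t)$ enters: the $C^{1+\gamma}$ bound $\|\tfrac{d}{du}[X(h(u))]\|_{\gamma,\T}$ together with $b$ controls how close to flat $\partial D$ is at a given scale, and the threshold scale below which annuli stop contributing is $\sim q(t)^{-1/\gamma}$ (schematically), so $\log^{+}(R/\varepsilon)$ is effectively capped at $\log^{+}(|D|^{1/2}q(t))$.

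Concretely I would make this quantitative as follows. Pull back to $\partial D_0$: writing a point of $\partial D$ as $X(h(u))$, the tangent direction $T(u)=\tfrac{d}{du}[X(h(u))]/|\tfrac{d}{du}[X(h(u))]|$ satisfies $|T(u)-T(v)|\lesssim \|\tfrac{d}{du}[X(h(u))]\|_{\gamma,\T}\,b^{-1}|u-v|^{\gamma}$, and by Lemma~\ref{jet} (applied to the curve $\partial D$ with its unit normal) the deviation of $\partial D$ from its tangent line at a point satisfies a $|y-x|^{1+\gamma}$ bound with constant $A\|N\|_{\gamma,\partial D}$; since $\|N\|_{\gamma,\partial D}\lesssim q(t)$ after normalizing, one gets: for $|y-x|\le r_0$ with $r_0\sim q(t)^{-1/\gamma}$, the portion of $B(x,r)\setminus D$ (for $x$ near $\partial D$) lies within a $C q(t)\,r^{1+\gamma}$-neighbourhood of a half-plane, whence $|A_j\cap D^c|\le \tfrac12|A_j|+Cq(t)(2^{j}\varepsilon)^{1+\gamma}\cdot(2^{j}\varepsilon)\le C(2^{j}\varepsilon)^2$, and moreover summing the correction terms $\sum_j q(t)(2^{j}\varepsilon)^{\gamma}$ over $2^{j}\varepsilon\le r_0$ converges geometrically to $O(1)$. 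For $x$ far from $\partial D$ the annuli near $x$ lie entirely inside or outside $D$ and contribute nothing. Putting the pieces together: the far part is $O(1)$, the near part over scales $\le r_0$ is $O(1)$, and the near part over scales in $[r_0,R]$ has at most $1+\log^{+}(R/r_0)\lesssim 1+\log^{+}(|D|^{1/2}q(t))$ annuli each contributing $O(1)$. Taking the supremum over $\varepsilon$ and $x$ yields the claimed bound.

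The main obstacle I anticipate is the careful bookkeeping of the ``straddling'' annuli: one must argue uniformly in the center $x$ (including $x\notin\partial D$ but close to it, and $x$ inside $D$), control the case where $B(x,2^{j+1}\varepsilon)$ meets $\partial D$ in possibly more than one arc, and make sure the chord–arc constant of $X$ (equivalently the lower bilipschitz bound $b$ and the $C^{1+\gamma}$ norm encoded in $q(t)$) is genuinely what controls the crossover scale $r_0$ — this is exactly the role of Lemma~\ref{jet}, which converts the $C^\gamma$ control of the normal into the flatness estimate $|(y-x)\cdot N(x)|\le A\|N\|_{\gamma,\partial D}|y-x|^{1+\gamma}$ needed to bound $|A_j\cap D^c|$. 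The rest is the routine even-CZ cancellation argument.
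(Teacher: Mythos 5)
Your overall architecture matches the paper's (scales beyond $|D|^{1/2}$ give $O(1)$, intermediate scales give the logarithm, scales below a threshold $\delta$ with $\delta^{-\gamma}\sim \|\tfrac{d}{du}[X(h(u))]\|_{\gamma,\T}\,b^{-(1+\gamma)}$ give $O(1)$ via $C^{1+\gamma}$ flatness), but the mechanism you propose at the small scales has a genuine gap. After using the full-annulus cancellation to replace $\chi_D$ by $\chi_D-1$ and then passing to $\int_{\varepsilon<|y-x|\le R}|K(x-y)|\chi_{D^c}(y)\,dy$, you have discarded all remaining cancellation. For $x\in\partial D$ every dyadic annulus $A_j$ centered at $x$ straddles the boundary and $|A_j\cap D^c|$ is comparable to $\tfrac12|A_j|$ no matter how flat $\partial D$ is; hence each annulus below your threshold $r_0$ still contributes a fixed positive constant of order $\int_{S^1}|K|\,d\sigma$, and summing over the roughly $\log(r_0/\varepsilon)$ such annuli diverges as $\varepsilon\to 0$. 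In your own bookkeeping you retain only the correction terms $q(t)(2^j\varepsilon)^{\gamma}$ and silently drop the main terms $\tfrac12|A_j|(2^j\varepsilon)^{-2}$: flatness does not make straddling annuli ``contribute nothing,'' it only controls the deviation of $D$ from a half-plane. The same defect makes your treatment of points off the boundary inconsistent: once you have swapped to $\chi_{D^c}$ and taken absolute values, annuli lying entirely outside $D$ contribute a full constant each, not zero.

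The missing ingredient is the signed cancellation against the tangent half-plane, which is exactly the paper's key step: since $K$ is even with vanishing integral on the unit circle, it has vanishing integral on every half-circle, so for $x=0\in\partial D$ with normal $(0,1)$ one may subtract, at each radius $\rho$, the integral of $K$ over $S^1\cap H^-$ for free, leaving only the symmetric difference $(D\setminus H^-)\cup(H^-\setminus D)$. Its angular measure at radius $\rho<\delta$ is at most $A\,\|\tfrac{d}{du}[X(h(u))]\|_{\gamma,\T}\,b^{-(1+\gamma)}\rho^{\gamma}$ by the estimate $|X_2(\alpha)|\le \|\tfrac{d}{du}[X(h(u))]\|_{\gamma,\T}\,b^{-(1+\gamma)}|X(\alpha)|^{1+\gamma}$ (your jet-lemma flatness bound is the right input here), and then $\int_0^\delta\rho^{\gamma-1}\,d\rho=\delta^\gamma/\gamma$ is $O(1)$ precisely by the choice of $\delta$. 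So the flatness must be paired with the half-circle cancellation, keeping the sign of $K$, rather than with a measure bound on $A_j\cap D^c$. Finally, the reduction of arbitrary $x\in\R^2$ to $x\in\partial D$ is not automatic from your annulus discussion; the paper handles it by the elementary argument of \cite[p.409-410]{MOV}, and some such argument is needed in your scheme as well.
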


\begin{proof}
By an elementary argument  \cite[p.409-410]{MOV} it is enough to prove the inequality for points~$x\in\partial D$. Without loss of generality one can assume that $x=0=X(0)$ and that the unitary normal vectors to~$\partial D_{0}$ and $\partial D$ at~$0$ are $(0,1)$. Define $\delta$ by
$$\delta^{-\gamma}=\frac{1}{b^{1+\gamma}} \|\frac{d}{du} [X(h(u))]\|_{\gamma, \T} .$$
Then for $\varepsilon>0$
\begin{equation}\label{eq6}
\begin{split}
\int_{|y|>\varepsilon} K(y) \chi_{D}(y)\,dy & = \int_{\delta >|y|>\varepsilon} K(y)\chi_{D}(y) \, dy+ \int_{|D|^{1/2}>|y|>\delta} K(y)\chi_{D}(y)\,dy
\\*[5pt] & + \int_{|y|>|D|^{1/2}} K(y)\chi_{D}(y)\,dy.
\end{split}
\end{equation}
The third term is estimated by
$$
\int_{|y|>|D|^{1/2}}\frac{1}{|y|^{2}} \chi_{D}(y)\,dy\le 1
$$
and the second by
$$
2\pi \,\sup_{|y|=1}|K(y)| \log \left( \frac{|D|^{1/2}}{\delta}\right)\le A\log^{+} \left(|D|^{1/2}\, \|\frac{d}{du} [X(h(u))]\|_{\gamma, \T}\, \frac{1}{b^{1+\gamma}}\right).
$$
It remains to estimate the first term in the right hand side of~\eqref{eq6}. The argument that follows is similar to that of \cite{BC} in the proof of Proposition 1, p.23--24. Incidentally, Proposition 1 of \cite{BC} was rediscovered in Lemma 5, p. 1142 of  \cite{MOV2}.

Set
\begin{equation*}
D_{\varepsilon,\delta}=\{y\in D:\varepsilon <|y|<\delta\}, \quad H^{-}=\{y=(y_1,y_2)\in \mathbb{R}^{2}: y_{2}<0\} 
\end{equation*}
\text{ and }
\begin{equation*}
S_{\rho}=\{y\in\mathbb{R}^{2},\,|y|=\rho\},\,\rho>0.
\end{equation*}
Since an even kernel with vanishing integral on the unit circumference  has also vanishing integral on half a circumference, we get
\begin{equation*}
\begin{split}
\int_{D_{\varepsilon,\delta}} K(y)\,dy&= \int_{\varepsilon}^{\delta} \left( \int_{\{\theta\in S^{1}:\rho\theta\in  D\}} K(\theta)\,d\sigma(\theta)\right)\frac{d\rho}{\rho}\\*[5pt]
&=\int_{\varepsilon}^{\delta}\left( \int_{\{\theta\in S^{1}:\rho\theta\in  D\}} K(\theta)\,d\sigma(\theta)-
\int_{S^{1}\cap H^{-}} K(\theta)\,d\sigma (\theta)\right)\frac{d\rho}{\rho}\\*[5pt]
&=\int_{\varepsilon}^{\delta}\left( \int_{\{\theta\in S^{1}:\rho\theta\in (D\backslash H^{-})\cup (H^{-}\backslash D)\}} K(\theta)\,d\sigma(\theta)\right)\frac{d\rho}{\rho}
\end{split}
\end{equation*}
and
$$
\left|\int_{D_{\varepsilon,\delta}}K(y)\,dy\right| \le \sup_{|\theta|=1}|K(\theta)| \int^{\delta}_{0} \sigma\{\theta\in S^{1}:\rho\theta\in (D\backslash H^{-})\cup (H^{-}\backslash D)\}\frac{d\rho}{\rho}.
$$
Since the domains $D\backslash H^{-}$ and $H^{-}\backslash D$ are  ``tangential"
$$
\sigma \{\theta\in S^{1} :\rho\theta \in (D\backslash H^{-})\cup (H^{-}\backslash D)\}\le A
\frac{1}{\rho}\sup \{ |X_{2}(\alpha)|: \alpha\in \partial D_{0}\text{ and }|X(\alpha)|=\rho\}.
$$

Take $|X(\alpha)| =\rho<\delta$.  If $\alpha=h(u)$ and $0=h(u_0),$ then
\begin{equation*}
\begin{split}
|X_{2}(\alpha)|&=|X_{2}(h(u))-X_{2}(h(u_0))|\le \|\frac{d}{du} [X_{2}(h(u))]\|_{\gamma, \T}\,|u-u_0|^{1+\gamma}\\*[5pt]
&\le \|\frac{d}{du} [X(h(u))]\|_{\gamma, \T}\,\frac{1}{b^{1+\gamma}}\,|X(\alpha)|^{1+\gamma}\\*[5pt]
&= \|\frac{d}{du} [X(h(u))]\|_{\gamma, \T}\,\frac{1}{b^{1+\gamma}}\, \rho^{1+\gamma}.
\end{split}
\end{equation*}
Thus
\begin{equation*}
\begin{split}
\left|\int_{D_{\varepsilon,\delta}}K(y)\,dy\right| &\le A\|\frac{d}{du} [X(h(u))]\|_{\gamma, \T} \,\frac{1}{b^{1+\gamma}} \int_{0}^{\delta} \rho^{\gamma-1}\,d\rho \\*[5pt]
&=A\, \|\frac{d}{du} [X(h(u))]\|_{\gamma, \T}\,\frac{1}{b^{1+\gamma}} \frac{\delta^{\gamma}}{\gamma}= A.\qedhere
\end{split}
\end{equation*}
\end{proof}

\section{End of the proof}

Having at our disposition the a priori estimate of section 2 and the logarithmic estimate it is a standard matter to complete the proof.
We remark that $\nabla v(\cdot,t)$ is a $2	\times2$ matrix with entries which are either constant multiples of $\chi_{D_t}$ or even homogeneous smooth Calder\'on -Zygmund operators applied to $\chi_{D_t}.$  Inserting the a priori estimate \eqref{apri} into the logarithmic inequality we obtain
$$
\|\nabla v(\cdot,t)\|_\infty 	\le C+C \, \int_0^t \left(1+ \|\nabla v(\cdot,s)\|_\infty \right)\, ds, \quad t \in (0,T).
$$
The factor $|D|^{1/2}$ in the inequality of Lemma \ref{lem4} causes no trouble because of the standard estimate
$$
\|\nabla X(\cdot,t)\|_\infty \le \exp \int_0^t \|\nabla v(\cdot,s)\|_\infty \,ds, \quad t \in (0,T).
$$
Gronwall's lemma then yields
$$
\|\nabla v(\cdot,t)\|_\infty 	\le C \exp(Ct), \quad t \in (0,T).
$$
Hence, for $t \in (0,T),$
$$
\| \frac{d}{du} [X(h(u),t)]\|_{\gamma, \T}  \le C\, \exp(C \exp(Ct)),
$$
and
$$
b(t) \ge C^{-1} \exp(- C\exp(Ct)) \ge  C^{-1} \exp(- C\exp(C T)).
$$
Consequently  $T=\infty.$

\section{Appendix: local in time existence for the CDE}
Our first remark is that if $k: \mathbb{\R}^2 \setminus \{0 \} \rightarrow \mathbb{\R}^2$ is an odd kernel, homogeneous of degree $-1,$ differentiable off the origin, then
\begin{equation}\label{}
k = \partial_1 (x_1 k ) + \partial_2 (x_2 k), \quad x=(x_1,x_2) \in \R^2\setminus \{0\}.
\end{equation}
This follows straightforwardly from Euler's theorem on homogeneous functions.

Assume that $\omega(z,t)=\chi_{D_t}(z)$ is a weak solution of the general equation \eqref{keq}. The velocity field is
\begin{equation*}\label{}
\begin{split}
v(\cdot,t)=\chi_{D_t}\star k &= \chi_{D_t}\star (\frac{\partial}{\partial x}(x k)+\frac{\partial}{\partial y}(y k))\\*[5pt] &= \frac{\partial}{\partial x} \chi_{D_t}\star (x k)+ \frac{\partial}{\partial y}\chi_{D_t}\star (y  k). \\*[5pt]
\end{split}
\end{equation*}
Since $\nabla \chi_{D_t}= -\vec{n} d\sigma = i dz,$   where $d\sigma$ is the arc-length measure on the curve $\partial D_t$ and $dz=dz_{\partial D_t},$ we have 
$$\frac{\partial}{\partial x} \chi_{D_t}=-n_1 d\sigma =-dy\quad \text{and}\quad \frac{\partial}{\partial y} \chi_{D_t}=-n_2 d\sigma =dx.$$ Setting $z=x+iy$ and $w=u+iv$ we get
\begin{equation}\label{vf}
\begin{split}
v(z,t)& = -\int_{\partial D_t} (x-u) k(z-w) \,dv + \int_{\partial D_t} (y-v) k(z-w) \,du \\*[5pt]
&= \int_{\partial D_t} k(z-w) \langle -i(z-w) \cdot \,dw\rangle,
\end{split}
\end{equation}
where $\langle	\cdot,\cdot \rangle$ denotes scalar product in the plane.

The flow mapping is the solution of the ODE
\begin{equation}\label{flo2}
\begin{split}
\frac{d}{dt}X(\alpha,t)&=v(X(\alpha,t),t),\\*[5pt]
X(\alpha,t)&=\alpha\in \R^2.
\end{split}
\end{equation}
Substituting the expression \eqref{vf} for the velocity field in \eqref{flo2} and making the change of variables $w=X(\beta,t)$ we get
\begin{equation*}\label{flow2}
\begin{split}
\frac{d}{dt}X(\alpha,t)&=v(X(\alpha,t),t)= \int_{\partial D_t} k(X(\alpha,t)-w) \langle -i(X(\alpha,t)-w), \,dw\rangle,\\*[5pt]
&= \int_{\partial D_0} k(X(\alpha,t)-X(\beta,t)) \langle -i(X(\alpha,t)-X(\beta,t)), \,\nabla X(\beta,t) (d \beta)\rangle.
\end{split}
\end{equation*}
In the last line  $\beta$ is a generic point of the curve $\partial D_0$  and $d \beta$ the standard complex differential form associated with the curve. Then
$$
\nabla X(\beta,t) (d \beta) = 	\left(\frac{\partial X_1}{\partial \beta_1}(\beta,t) \, d\beta_1 +\frac{\partial X_1}{\partial \beta_2}(\beta,t)\, d\beta_2,\,
  \frac{\partial X_2}{\partial \beta_1}(\beta,t)\, d\beta_1 +\frac{\partial X_2}{\partial \beta_2}(\beta,t)\, d\beta_2\right).
$$
In fact $\nabla X(\beta,t) $ can be understood more intrinsecally as the differential $DX(\beta,t)$ of the mapping $X(\cdot,t)$ as a 
differentiable mapping from the differentiable curve $\partial D_0$ onto the differentiable curve $\partial D_t.$  This mapping takes a tangent vector to $\partial D_0$ at the point $\beta$ into a tangent vector to $\partial D_t$ at the point $X(\beta,t).$ The point is that this operation involves only first derivatives of $X(\cdot,t)$ on $\partial D_0.$ 

Consider the Banach space $C^{1+\gamma}(\partial D_0,\R^2)$ endowed with the norm \eqref{norm} and the open set $\Omega$
consisting of those mappings in  $C^{1+\gamma}(\partial D_0,\R^2)$ satisfying the bilipschitz condition \eqref{bil}. For each $X \in \Omega$ define
\begin{equation}\label{cdef}
F(X)(\alpha) = \int_{\partial D_0} k(X(\alpha)-X(\beta)) \langle -i(X(\alpha)-X(\beta)), \,\nabla X(\beta) (d \beta)\rangle, \quad \alpha \in \partial D_0.
\end{equation}
The CDE is the autonomous ODE in $\Omega$
\begin{equation}\label{cde}
\frac{dX}{dt}=F(X), \quad X(\cdot,0)=I,
\end{equation}
where $I$ is the identity mapping on $\partial D_0.$ 

Of course one has to check that $F(X)$ belongs to $C^{1+\gamma}(\partial D_0,\R^2)$ for each $X\in \Omega.$  After that one wants to apply the existence and uniqueness theorem of Picard, and for that one needs to check that $F$ is locally Lipschitz in $\Omega.$

All these facts are verified routinely and depend essentially on the fact that the kernel appearing in \eqref{cdef} is of class $C^2(\R^2\setminus\{0\}, \R^2)$ and homogeneous of degree $0.$ The reader may consult \cite[Chapter 8]{MB} for the case of the vorticity equation. For instance, when you take a derivative in $\alpha$ in \eqref{cdef} to prove that $F(X)$ is in $C^{1+\gamma}(\partial D_0,\R^2),$ then you get an expression of the form
\begin{equation}\label{derf}
 \int_{\partial D_0} H(X(\alpha)-X(\beta)) L(\nabla X(\beta), d \beta),
\end{equation}
where $H(\cdot)$ is a kernel of homogeneity $-1$ of class $C^1(\R^2\setminus\{0\},\R^2)$ and $ L(\nabla X(\beta), d \beta)$ a linear expression in $\partial X_j / \partial \beta_k$ and $d\beta_j.$ Hence \eqref{derf} may be understood as a (non-convolution) singular integral on the curve $\partial D_0$ applied to a linear combination of the $\partial X_j / \partial \beta_k,$ which are functions satisfying a H\"older condition of order $\gamma.$ Then the result is a function of $\alpha$ in the same space.

An analogous situation appears in checking that $F$ is locally Lipschitz in $\Omega.$

The reader may also consult  \cite{BGLV}, where full details are provided in a similar context.

\begin{acknowledgements}  The author is grateful to J.B.Garnett for many inspiring conversations on the subject.
The author acknowledges support by 2021-SGR-00071 (Generalitat de Catalunya) and PID2020-112881GB-100.
\end{acknowledgements}

\noindent
{\bf Statements and declarations.} The author has no conflict of interests to declare.\medskip

\noindent
{\bf Data availability statement.} Data sharing not applicable to this article as no datasets were generated or analysed during the current study.

\vspace{0.5cm}
{\small
\begin{tabular}{@{}l}
J.\ Verdera,\\
Departament de Matem\`{a}tiques, Universitat Aut\`{o}noma de Barcelona,\\ 
Centre de Recerca Matem\`atica, Barcelona, Catalonia.\\
{\it E-mail:} {\tt joan.verdera@uab.cat}

\end{tabular}}

\end{document}